\title[Class groups of open Richardson varieties]
{Class groups of open Richardson varieties in the Grassmannian are trivial}
\author{Jake Levinson}
\address{Mathematics Department\\
         University of Washington\\
         Seattle, WA, 98195-4350\\
         USA}
\email{jlev@uw.edu}
\author{Kevin Purbhoo}
\address{Combinatorics \& Optimization Dept.\\
         University of Waterloo\\
         Waterloo, ON, N2L 3G1\\
         CANADA}
\email{kpurbhoo@math.uwaterloo.ca}
\newcommand{\ZZ}{\mathbb{Z}}
\newcommand{\RR}{\mathbb{R}}
\newcommand{\calI}{\mathcal{I}}
\newcommand{\calM}{\mathcal{M}}
\newcommand{\calP}{\mathcal{P}}
\newcommand{\calQ}{\mathcal{Q}}
\newcommand{\openX}{\mathring{X}}
\newcommand{\openA}{\mathring{A}}
\newcommand{\nth}{\ensuremath{^\text{th}}\xspace}
\newcommand{\Plucker}{Pl\"ucker\xspace}
\newcommand{\Gr}{\mathrm{Gr}}
\newcommand{\Proj}{\mathop{\mathrm{Proj}\,}}
\newcommand{\Spec}{\mathop{\mathrm{Spec}\,}}
\newcommand{\allsubsets}{\mathcal{S}(k,n)}
\newcommand{\pc}{\Delta}
\newcommand{\symgroup}{\mathfrak{S}_n}
\newcommand{\lowerschubert}{X_\beta}
\newcommand{\openlowerschubert}{\openX_\beta}
\newcommand{\upperschubert}{X^\gamma}
\newcommand{\openupperschubert}{\openX^\gamma}
\newcommand{\rich}{X_\beta^\gamma}
\newcommand{\openrich}{\openX_\beta^\gamma}
\newcommand{\pdivisor}{X_{\calP_t}}
\newcommand{\openpdivisor}{\openX_{\calP_t}}
\newcommand{\matroidvariety}[1]{X_{#1}}
\newcommand{\paramrich}{W_\beta^\gamma}
\newcommand{\paramrichmat}{Y_\beta^\gamma}
\newcommand{\lowerschubertring}{A_\beta}
\newcommand{\openlowerschubertring}{\openA_\beta}
\newcommand{\upperschubertring}{A^\gamma}
\newcommand{\openupperschubertring}{\openA^\gamma}
\newcommand{\richring}{A_\beta^\gamma}
\newcommand{\openrichring}{\openA_\beta^\gamma}
\newcommand{\paramrichring}{B_\beta^\gamma}
\newcommand{\grring}{A}
\newcommand{\pluckerideal}{I}
\newcommand{\field}{\mathbb{K}}
\newenvironment{packedenumi}{
\begin{enumerate}[(i)]
  \setlength{\itemsep}{0pt}
}{\end{enumerate}}
\newenvironment{packeditemize}{
\begin{itemize}[topsep=1ex]
  \setlength{\itemsep}{0pt}
  \setlength{\parskip}{.4ex}
}{\end{itemize}}
\newtheorem{lemma}{Lemma}
\newtheorem{theorem}[lemma]{Theorem}
\newtheorem{proposition}[lemma]{Proposition}
\newtheorem{corollary}[lemma]{Corollary}
\theoremstyle{definition}
\newtheorem{remark}[lemma]{Remark}
\begin{document}

\begin{abstract}
We prove that the divisor class group of any open Richardson variety in the Grassmannian is trivial.  Our proof uses Nagata's criterion, localizing the coordinate ring at a suitable set of \Plucker coordinates. We prove that these \Plucker coordinates are prime elements by showing that the subscheme they define is an open subscheme of a positroid variety. Our results hold over any field and over the integers.
\end{abstract}

\maketitle

\section{Introduction}

Let $\openrich \subset \Gr(k,n)$ be an open Richardson variety in the Grassmannian, over an arbitrary field or the integers. The purpose of this paper is to prove the following theorem.

\begin{theorem}
\label{thm:classgroup}
The divisor class group of $\openrich$ is trivial.
\end{theorem}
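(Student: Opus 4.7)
The plan is to apply Nagata's criterion, as announced in the abstract. Recall that if $R$ is a Noetherian normal domain and $f_1,\ldots,f_r \in R$ are prime elements such that the localization $R[f_1^{-1},\ldots,f_r^{-1}]$ has trivial class group, then $\mathrm{Cl}(R) = 0$. I would apply this to $R = \openrichring$, the coordinate ring of $\openrich$. The Noetherian and normality hypotheses are standard: open Richardson varieties are smooth, irreducible, affine varieties of finite type over the base, so $\openrichring$ is a Noetherian regular (hence normal) domain.

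The first step is to identify Plücker coordinates $p_{I_1},\ldots,p_{I_r}$ (viewed as regular functions on $\openrich$ after dehomogenizing by some $p_\alpha$ that is nonvanishing on $\openrich$) whose simultaneous non-vanishing locus in $\openrich$ is a ``top Deodhar cell''---a principal open subset isomorphic to $\field^a \times (\field^\times)^b$. Such a cell is a localization of a polynomial ring, hence a unique factorization domain, so its class group vanishes, discharging the localization hypothesis of Nagata's criterion. Combinatorially, these $I_j$'s should index the codimension-one boundary components of $\openrich$ in a natural partial compactification (for instance, in the closed Richardson $\rich$ or in its closure inside $\Gr(k,n)$), so that removing them recovers the top cell.

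The main technical step is to show that each $p_{I_j}$ is a prime element of $\openrichring$, which, by normality, is equivalent to showing that the scheme $V(p_{I_j}) \cap \openrich$ is integral. Following the hint in the abstract, I would identify each such vanishing locus with an open subscheme of a positroid variety, indexed by a decorated permutation obtained from a natural modification of the Richardson data $(\beta,\gamma)$. Since positroid varieties are known by Knutson--Lam--Speyer to be integral schemes, any open subscheme of one is again integral, which yields the primality of $p_{I_j}$ in $\openrichring$. The main obstacle will be this identification: specifying the correct positroid combinatorially, producing an explicit isomorphism of $V(p_{I_j}) \cap \openrich$ with a distinguished open subset of the positroid variety, and---most delicately---verifying that the scheme structures coincide, so that no nilpotents are introduced that would spoil primality. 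Once this identification is in place, Theorem~\ref{thm:classgroup} follows formally from Nagata's criterion.
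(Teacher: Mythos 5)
Your proposal follows exactly the same route as the paper: Nagata's criterion applied to $\openrichring$, localizing at the \Plucker coordinates $\pc_{\delta_1},\dots,\pc_{\delta_{k-1}}$ to obtain a cell $\paramrich$ isomorphic to a product of affine space and a torus (hence a UFD), and establishing primality of each $\pc_{\delta_t}$ by identifying its vanishing locus in $\openrich$ with an open subscheme of a positroid variety and invoking Knutson--Lam--Speyer integrality. The one small point you leave implicit, which the paper addresses separately, is that some $\pc_{\delta_t}$ may already be units of $\openrichring$ (when the associated set $\calP_t$ is empty), but this is harmless for Nagata's criterion.
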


The corresponding statement for Schubert cells is clear, since every Schubert cell is isomorphic to affine space.  The divisor class group and the Picard group of Schubert varieties in a generalized flag manifold (over a field) were computed by Mathieu \cite{Mathieu} (see also \cite{Brion}, for an overview of these and other related results). This is made even more explicit in \cite{WY}.

Comparatively little appears to be known about the divisor class group and the Picard group of Richardson varieties. Open Richardson varieties are smooth, affine, and rational, but these facts do not determine the divisor class group. On the other hand, each of the following statements is equivalent to Theorem~\ref{thm:classgroup} (see e.g. \cite[\S 14.2]{Vakil}):
\begin{packeditemize}
\item Every divisor on $\openrich$ is principal.
\item The coordinate ring of $\openrich$ is a unique factorization domain.
\item The Picard group of $\openrich$ is trivial.
\item The divisor class group of the closed Richardson variety 
$\rich$ is generated
by the boundary divisors of $\openrich$.
\end{packeditemize}

We prove Theorem~\ref{thm:classgroup} by showing that the homogeneous coordinate ring is a unique factorization domain. 
This is done by a suitable application of Nagata's criterion. As part of our argument, we study the principal ideals defined by certain \Plucker coordinates, $\pc_{\delta_1}, \dots, \pc_{\delta_{k-1}}$. We show that the subscheme of $\openrich$ defined by each of these ideals is an open subscheme of a positroid variety.  This allows us to deduce that these \Plucker coordinates are prime elements of the coordinate ring of the open Richardson variety.

\begin{figure}
\centering
\[
\begin{bmatrix}
 0 & *'& * & * & * & 1 & 0 & 0 & 0 & 0 & 0 & 0 & 0 & 0 \\
 0 & 0 & 0 & 0 & *'& * & * & 1 & 0 & 0 & 0 & 0 & 0 & 0 \\
 0 & 0 & 0 & 0 & 0 & *'& * & * & * & * & 1 & 0 & 0 & 0 \\
 0 & 0 & 0 & 0 & 0 & 0 & 0 & 0 & 0 & *'& * & 1 & 0 & 0  \\
\end{bmatrix}
\,.
\]
\caption{Matrices parameterizing the subscheme $\paramrich \subseteq \Gr(4,14)$.  
Here $\beta=\{2,5,6,10\}$, $\gamma = \{6,8,11,12\}$.
The $*$-entries can assume any value, and $*'$ are required to be invertible.}
\label{fig:parameterized}
\end{figure}

There is an open subscheme $\paramrich \subseteq \openrich$, parameterized by $k \times n$ matrices of the form in Figure~\ref{fig:parameterized}.  $\paramrich$ is obtained by localizing the coordinate ring of $\openrich$ at $\pc_{\delta_1}, \dots, \pc_{\delta_{k-1}}$, which amounts to a statement about existence of certain $LU$ decompositions. As a consequence, we also obtain a description of this nicely parameterizable subscheme as the complement of a union of positroid varieties in the Richardson variety (see \S\ref{sec:remarks}).

It is important for us that Theorem \ref{thm:classgroup} holds without working over an algebraically closed field, because this paper was written with a specific application in mind.  For one of the results in \cite{LP}, we needed to show that a certain divisor on $\openrich$ is principal over $\RR$. This was not obvious from the definition of the divisor, and Theorem~\ref{thm:classgroup} arose as the most expeditious way to prove it.

It would be interesting to generalize Theorem~\ref{thm:classgroup} beyond the Grassmannian. David Speyer has suggested to us that it may be possible prove an analogous statement for flag varieties, using the Deodhar cell \cite{Deodhar} in place of $\paramrich$. This would be more complicated and would require some different tools, as the part of this story involving positroid varieties is necessarily specific to the Grassmannian.

\section{Subschemes of the Grassmannian}

Let $\allsubsets$ denote the set of all $k$-element subsets of $[n] = \{1, \dots, n\}$.  
For $\alpha \in \allsubsets$, write $\alpha(i)$ for the $i$\nth smallest element of $\alpha$.  There is a natural poset structure of $\allsubsets$ defined by $\alpha \leq \alpha'$ if $\alpha(i) \leq \alpha'(i)$ for all $i =1, \dots, k$.  For $\beta, \gamma \in \allsubsets$, write $[\beta, \gamma]$ to denote the interval $\{\alpha \in \allsubsets \mid
\beta \leq \alpha \leq \gamma\}$ in this poset.  We will also use this notation to denote intervals in other posets.

Let $\field$ be a field. The \emph{Grassmannian} $\Gr(k,n)$ is the projective scheme whose points represent $k$-planes in $\field^n$.  Formally, $\Gr(k,n) = \Proj \grring$, 
\[
 \grring = \field[\pc_\alpha \mid \alpha \in \allsubsets]/\pluckerideal
\,,
\]
where $\pc_\alpha$ are indeterminates called the \emph{\Plucker coordinates}, and $\pluckerideal$ is the \emph{\Plucker ideal}. $\pluckerideal$ is generated by quadratic elements of the form
\begin{equation}
\label{eqn:pluckerrelation}
   \pc_\alpha \pc_\beta 
  - \sum_{j \in \alpha \setminus \beta} \pm \pc_{\alpha \setminus \{j\} \cup \{i\}} \pc_{\beta \setminus \{i\} \cup \{j\}}
\,,
\end{equation}
where $\alpha, \beta \in \allsubsets$ and $i \in \beta \setminus \alpha$. The precise signs in~\eqref{eqn:pluckerrelation} are not too hard to describe, but we will not do so here, since we will not need them.

If $V \in \Gr(k,n)$ is a $k$-plane in $\field^n$, we can represent $V$ as the row space of a $k \times n$ matrix $M$ with entries in $\field$.  The \Plucker coordinates of $V$ are the maximal minors of this matrix: $V$ is associated to the map $\pc_\alpha \mapsto \det M_\alpha$, where $M_\alpha$ the sumatrix of $M$ with column set $\alpha$. Many important subschemes of $\Gr(k,n)$ are defined by vanishing and non-vanishing of certain \Plucker coordinates. These include Schubert varieties, Richardson varieties, and their
open counterparts.

Fix $\beta \leq \gamma \in \allsubsets$, and let
\begin{align*}
\lowerschubertring &= \grring/\langle \pc_\alpha \mid \alpha \ngeq \beta \rangle &
\lowerschubert &= \Proj \lowerschubertring
\\
\upperschubertring &= \grring/\langle \pc_\alpha \mid \alpha \nleq \gamma \rangle &
X^\gamma &= \Proj \upperschubertring
\\
\richring &= \grring/\langle \pc_\alpha \mid \alpha \notin [\beta,\gamma] \rangle&
\rich &= \Proj \richring
\,.
\end{align*}
$\lowerschubert$ is a \emph{Schubert variety}, $X^\gamma$ is an \emph{opposite Schubert variety}, and $\rich = \lowerschubert \cap X^\gamma$ is a \emph{Richardson variety}.  (See \cite{HP} for the equivalence of this definition with other definitions of Schubert varieties.)

The Schubert cells and the open Richardson variety are obtained algebraically as localizations of the rings above. For a ring $R$ and an element $r \in R$, let $R[r^{-1}]$ denote the localization of $R$ at $r$. Let
\begin{align*}
\openlowerschubertring &= \lowerschubertring[\pc_\beta^{-1}]  &
\openlowerschubert &= \Proj \openlowerschubertring
\\
\openupperschubertring &= \upperschubertring[\pc_\gamma^{-1}]  &
\openX^\gamma &= \Proj \openupperschubertring
\\
\openrichring &= 
\richring [\pc_\beta^{-1}, \pc_\gamma^{-1}] &
\openrich &= \Proj \openrichring
\,.
\end{align*}
Then $\openlowerschubert \subseteq \lowerschubert$ is the \emph{Schubert cell}, $\openX^\gamma \subseteq X^\gamma$ is the \emph{opposite Schubert cell}, and $\openrich = \openlowerschubert \cap \openX^\gamma$ is the \emph{open Richardson variety}.  Note that these are affine schemes, since $\Proj \grring[\pc_\alpha^{-1}] = \Spec \grring/\langle \pc_\alpha-1 \rangle$ (c.f. Remark~\ref{rmk:proj}). We will establish Theorem~\ref{thm:classgroup} by proving the following equivalent proposition.

\begin{proposition}
\label{prop:UFD}
$\openrichring$ is a unique factorization domain.
\end{proposition}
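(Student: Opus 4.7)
The strategy, as announced in the introduction, is Nagata's criterion in the form: a Noetherian integral domain $R$ is a UFD provided there are prime elements $p_1,\dots,p_m\in R$ such that $R[p_1^{-1},\dots,p_m^{-1}]$ is a UFD. I would take $R=\openrichring$ and $p_i=\pc_{\delta_i}$, the $k-1$ distinguished \Plucker coordinates singled out in the introduction.

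As preliminaries, $\openrichring$ is a Noetherian integral domain: it is finitely generated as a $\field$-algebra, and the closed Richardson variety $\rich$ is irreducible, so any nonempty open subscheme is integral. I would then show that the further localization $\openrichring[\pc_{\delta_1}^{-1},\dots,\pc_{\delta_{k-1}}^{-1}]$ coincides with the coordinate ring $\paramrichring$ of the subscheme $\paramrich$ pictured in Figure~\ref{fig:parameterized}. The matrix parameterization exhibits $\paramrichring$ as a Laurent polynomial ring $\field[x_1,\dots,x_a,y_1^{\pm 1},\dots,y_{k-1}^{\pm 1}]$, where the $x_j$ are the free $*$-entries and the $y_j$ are the invertible $*'$-entries; such a ring is a UFD. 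The content of this step is essentially an existence/uniqueness statement for $LU$-type row reduction, showing that nonvanishing of $\pc_\beta$, $\pc_\gamma$, and the $\pc_{\delta_i}$ on a point of $\openrich$ is equivalent to the existence of a (unique) matrix representative of the prescribed form.

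The main obstacle is to prove each $\pc_{\delta_i}$ is a prime element of $\openrichring$, i.e.\ that the principal subscheme cut out by $\pc_{\delta_i}$ in $\openrich$ is integral. Following the strategy foreshadowed in the abstract, I would exhibit this subscheme as an open subscheme of a positroid variety $X_{\calP_i}\subseteq\Gr(k,n)$. Since positroid varieties are known to be irreducible and reduced (by Knutson--Lam--Speyer), the same holds for any nonempty open subscheme, which gives primality. The combinatorial task is to identify the positroid $\calP_i$: vanishing of $\pc_{\delta_i}$ should correspond to ``collapsing'' one of the interior pivots of the matrix in Figure~\ref{fig:parameterized}, producing a cyclic rank pattern that defines a positroid sitting inside $[\beta,\gamma]$. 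The delicate point is the scheme-theoretic, not merely set-theoretic, equality of the vanishing locus with the open positroid subscheme, which I would expect to verify by a careful comparison of defining ideals using the \Plucker-coordinate description of positroid varieties.

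Combining these three pieces, Nagata's criterion yields that $\openrichring$ is a UFD, proving Proposition~\ref{prop:UFD} and hence Theorem~\ref{thm:classgroup}.
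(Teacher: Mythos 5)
Your proposal follows essentially the same route as the paper: apply Nagata's criterion with $R=\openrichring$ and the $\pc_{\delta_t}$; identify $\openrichring[\pc_{\delta_1}^{-1},\dots,\pc_{\delta_{k-1}}^{-1}]=\paramrichring$ and show via the matrix parameterization of Theorem~\ref{thm:paramrich} that it is (a Laurent extension of) a polynomial ring, hence a UFD; and establish primality of the $\pc_{\delta_t}$ by realizing the subscheme $\Proj\openrichring/\langle\pc_{\delta_t}\rangle$ as an open subscheme of the positroid variety $\pdivisor$, which is integral by Knutson--Lam--Speyer.

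The one substantive point you elide is that $\pc_{\delta_t}$ need not be a prime element of $\openrichring$: when $\beta(t+1)>\gamma(t)$, the set $\calP_t$ is empty, there is no positroid variety to point to, and in fact $\pc_{\delta_t}$ is a \emph{unit} in $\openrichring$ (this is the paper's Corollary~\ref{cor:unit}, derived from Lemma~\ref{lem:principalideal}). Units are not primes, so your plan to ``prove each $\pc_{\delta_i}$ is a prime element'' would hit a wall in that case, and Nagata's criterion as stated (multiplicative set generated by primes) does not directly accommodate throwing in units. The fix is minor --- discard the units from the localizing set, or equivalently observe that inverting a unit does nothing --- but the dichotomy does have to be noticed and handled, as the paper does by splitting into Corollaries~\ref{cor:unit} and~\ref{cor:prime}.
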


The proof of Proposition~\ref{prop:UFD} is based on Nagata's Criterion (see e.g. \cite[Lemma 19.20]{Eisenbud}).

\begin{lemma}[Nagata's Criterion]
\label{lem:Nagata}
Suppose $R$ is a Noetherian domain, and $S \subset R$ is a multiplicative subset generated by prime elements of $R$.  $R$ is a unique factorization domain if and only if $S^{-1}R$ is a unique factorization domain.
\end{lemma}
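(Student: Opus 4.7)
The plan is to handle each direction of the equivalence separately, with the reverse direction being the main substance.

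The forward direction, that any localization of a UFD is a UFD, is standard: given a nonzero $r/s \in S^{-1}R$, factor $r$ into primes in $R$, and observe that each prime factor either becomes a unit in $S^{-1}R$ (and is absorbed into the unit part) or remains prime there. For the reverse direction, I would invoke the well-known criterion that a Noetherian domain is a UFD if and only if every nonzero prime ideal contains a prime element. So let $P \subset R$ be a nonzero prime. If $P$ meets $S$, then $P$ contains some product of the given prime generators of $S$, and by primality of $P$ one of those generators lies in $P$ itself, and we are done.

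Otherwise $P \cap S = \emptyset$, so $S^{-1}P$ is a nonzero prime of the UFD $S^{-1}R$ and contains a prime element, which after clearing denominators (and using that no element of $S$ lies in $P$) may be taken in the form $p \in R \cap P$, still prime in $S^{-1}R$. To upgrade $p$ to a prime of $R$, I would first perform a Noetherian descent: while some prime generator $q$ of $S$ divides $p$ in $R$, replace $p$ by $p/q$. Since $q \notin P$, primality of $P$ gives $p/q \in P$; since $q$ is a unit in $S^{-1}R$, $p/q$ remains prime there. The inclusion $(p) \subsetneq (p/q)$ is strict (otherwise $1 = qu$ for some $u \in R$, contradicting that $q$ is a nonunit), so the ascending chain condition forces termination. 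The result is an element of $P$, prime in $S^{-1}R$, with no prime generator of $S$ as a divisor in $R$.

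Finally, I would verify that this $p$ is prime in $R$: if $p \mid ab$ in $R$, then $p \mid a$ (say) in $S^{-1}R$, giving $sa = pc$ for some $s \in S$ and $c \in R$. Writing $s$ as a product of prime generators of $S$ and using that none of them divides $p$ in $R$, primality in $R$ forces each to divide $c$; hence $s \mid c$ in $R$, and therefore $p \mid a$ in $R$. The main obstacle is precisely this last step: the hypothesis that $S$ is generated by prime elements, and not merely multiplicatively closed, is what allows divisibility in $S^{-1}R$ to be pulled back to $R$, while the Noetherian hypothesis is what makes the descent procedure terminate.
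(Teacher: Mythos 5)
The paper does not prove this lemma; it cites it directly (Eisenbud, Lemma 19.20), so there is no proof in the paper to compare against. That said, your proof is correct and complete, and it follows the standard route: reduce to Kaplansky's criterion (every nonzero prime ideal contains a prime element), split into the cases $P \cap S \neq \emptyset$ and $P \cap S = \emptyset$, pull a prime of $S^{-1}R$ back into $R \cap P$, strip off prime generators of $S$ using ACC on principal ideals, and finally verify primality in $R$ by writing $s$ as a product of prime generators and cancelling one at a time from $sa = pc$. The two places where care is needed --- that $(S^{-1}P) \cap R = P$ so the numerator you choose really lies in $P$, and that the cancellation in the last step is done iteratively $q_i$ by $q_i$ rather than all at once --- are both handled correctly. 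The Noetherian hypothesis enters only through ACC on principal ideals, which is exactly where you use it, and the hypothesis that $S$ is generated by primes (not merely saturated or multiplicatively closed) is exactly what powers both the stripping step and the final descent from $S^{-1}R$ to $R$, as you note. No gaps.
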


We apply this as follows. Define $\delta_0, \dots, \delta_k \in \allsubsets$ by
\[
   \delta_t = \{\beta(1), \dots, \beta(t), \gamma(t+1), \dots, \gamma(k)\}
\,.
\]
In particular, $\delta_0 = \gamma$ and $\delta_k = \beta$. Consider the ring
\[
\paramrichring
= \richring [\pc_{\delta_0}^{-1}, \dots, \pc_{\delta_k}^{-1}]
= \openrichring[\pc_{\delta_1}^{-1}, \dots, \pc_{\delta_{k-1}}^{-1}]
\,.
\]
\begin{proof}[Proof of Proposition~\ref{prop:UFD}]
We will show that $\paramrichring$ is a unique factorization domain (Corollary~\ref{cor:localizedufd}).  Moreover, $\pc_{\delta_1}, \dots \pc_{\delta_{k-1}}$ are either units or primes of $\openrichring$ (Corollaries~\ref{cor:unit} and~\ref{cor:prime}). The theorem then follows from Lemma~\ref{lem:Nagata}.
\end{proof}

\begin{remark} 
\label{rmk:proj}
In many references (e.g. \cite{Hartshorne, Vakil}), the $\Proj$ construction is defined only for non-negatively graded rings $R = \bigoplus_{d \geq 0} R_d$. However, the construction can be applied more generally to $\ZZ$-graded rings $R = \bigoplus_{d \in \ZZ} R_d$, in which negative degrees are allowed.
As $\bigoplus_{d > 0} R_d$ is not necessarily an ideal of $R$, the irrelevant ideal $R_+$ is defined as the ideal generated by homogeneous elements of positive degree; this is the only change required.  If $R$ is a $\field$-algebra, this is equivalent to defining $\Proj R$ as the GIT quotient $\Spec R/\!\!/\mathbb{G}_m$, for the linearized $\mathbb{G}_m$ action on $\Spec R$ defined by the grading (see \cite[\S8.3]{Dolgachev} for examples).  In the $\ZZ$-graded rings above ($\openlowerschubertring$, $\openupperschubertring$, etc.), the irrelevant ideal is $\langle 1 \rangle$, which implies $\Proj R = \Spec R_0$, and $\Spec R = \Proj R \times \mathbb{G}_m$. We make use of this implicitly in our arguments.
\end{remark}

\subsection{Working over $\Spec \mathbb{Z}$}

The rings and schemes of the previous section are also defined for $\field = \mathbb{Z}$. We briefly say how to extend Proposition \ref{prop:UFD} and Theorem \ref{thm:classgroup} to this case.

\begin{lemma} \label{lem:integral-over-z}
Over $\Spec \mathbb{Z}$, $\rich$ (and therefore $\openrich$) is an integral scheme.
\end{lemma}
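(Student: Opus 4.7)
The plan is to prove that $\richring$ is an integral domain over $\ZZ$, from which both parts of the lemma follow at once: $\rich = \Proj \richring$ is then an integral scheme, and $\openrichring = \richring[\pc_\beta^{-1},\pc_\gamma^{-1}]$ is a localization of a domain, hence a domain, so $\openrich$ is integral as well.

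To show $\richring$ is a domain over $\ZZ$, the strategy is to combine $\ZZ$-flatness of $\richring$ with the known integrality of $\rich$ over $\QQ$. Over any field $\field$, the ring $\richring \otimes_\ZZ \field$ is a domain by the classical theory: the Richardson variety is the reduced, irreducible intersection of a Schubert variety and an opposite Schubert variety in $\Gr(k,n)$. For $\ZZ$-flatness, I would invoke Hodge's standard monomial theory, which provides a $\ZZ$-basis of $\grring$ consisting of standard monomials $\pc_{\alpha_1} \cdots \pc_{\alpha_r}$ with $\alpha_1 \le \cdots \le \alpha_r$. Since the ideal defining $\richring$ is generated by the $\pc_\alpha$ with $\alpha \notin [\beta,\gamma]$, its image in $\grring$ is $\ZZ$-spanned by those standard monomials involving some $\alpha_i \notin [\beta,\gamma]$. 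Consequently, $\richring$ inherits a $\ZZ$-basis of standard monomials with all $\alpha_i \in [\beta,\gamma]$, and is therefore a free (hence flat) $\ZZ$-module.

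With $\ZZ$-flatness in hand, the natural map $\richring \to \richring \otimes_\ZZ \QQ$ is injective, so $\richring$ embeds into the domain $\richring \otimes_\ZZ \QQ$ and is itself a domain. Localizing then gives integrality of $\openrich$.

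The main point requiring care is invoking a form of standard monomial theory that is valid over $\ZZ$ and compatible with the Richardson stratification; Hodge's original argument handles the Grassmannian over $\ZZ$, and extensions to Schubert and Richardson varieties appear in work of Musili, DeConcini--Lakshmibai, and Lakshmibai--Littelmann. A self-contained alternative would be to exhibit an explicit Gröbner basis for the defining ideal of $\richring$ whose initial ideal is a squarefree monomial ideal, which would give $\ZZ$-flatness directly via Gröbner degeneration to a Stanley--Reisner scheme.
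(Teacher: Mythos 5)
Your argument is correct and follows the same overall logic as the paper's proof: show that $\richring$ is torsion-free over $\ZZ$, so that it embeds into $\richring\otimes_\ZZ\QQ$, and then cite integrality of Richardson varieties over a field. Where the paper justifies torsion-freeness by appealing to the fact that the Hilbert function of $\richring$ is the same over every field (hence there can be no $p$-torsion), you give the underlying mechanism: standard monomial theory provides a free $\ZZ$-basis of $\richring$ by standard monomials supported on chains in $[\beta,\gamma]$. These are really the same idea at different levels of explicitness, since the characteristic-freeness of the Hilbert function is itself usually established via straightening laws. You also correctly isolate the one step needing care: that the ideal $\langle \pc_\alpha \mid \alpha\notin[\beta,\gamma]\rangle$ is $\ZZ$-spanned by the standard monomials having some factor outside $[\beta,\gamma]$ (equivalently, that straightening preserves this span) is not Hodge's original Grassmannian statement but its extension to Schubert and Richardson varieties, due to Musili, DeConcini--Lakshmibai, et al.; citing those sources is exactly what makes the argument complete.
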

\begin{proof}
The corresponding statement over an arbitrary field is well-known. Therefore, if $\rich$ is not integral over $\mathbb{Z}$, the ring $\richring$ must have $p$-torsion for some $p$. This cannot happen since e.g. the Hilbert function of $\richring$ is the same over any field.
\end{proof}

\begin{lemma}
If Proposition \ref{prop:UFD} holds over $\mathbb{Q}$, it holds over $\mathbb{Z}$.
\end{lemma}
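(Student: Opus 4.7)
The plan is to deduce the $\mathbb{Z}$-case directly from the $\mathbb{Q}$-case via Nagata's criterion, using the primes of $\mathbb{Z}$ themselves as the multiplicative set. Concretely, let $S \subset \openrichring$ be the multiplicative subset generated by the rational primes $p \in \mathbb{Z}$ (viewed as elements of $\openrichring$ via the structure map). Then $S^{-1}\openrichring = \openrichring \otimes_\mathbb{Z} \mathbb{Q}$, which is precisely the coordinate ring of the open Richardson variety over $\mathbb{Q}$, and hence a UFD by the assumed $\mathbb{Q}$-case of Proposition~\ref{prop:UFD}. The ring $\openrichring$ is Noetherian (it is a finitely generated $\mathbb{Z}$-algebra) and a domain by Lemma~\ref{lem:integral-over-z}, so Nagata's criterion will give the conclusion once we know that each $p \in \mathbb{Z}$ is a prime element of $\openrichring$.

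To check that $p$ is prime in $\openrichring$, I would show that $\openrichring/(p)$ is an integral domain. The key input is that the construction of $\openrichring$ commutes with base change: the Hilbert-function argument in the proof of Lemma~\ref{lem:integral-over-z} shows that $\richring$ is torsion-free over $\mathbb{Z}$ and hence flat, so $\richring/(p) \cong \richring_{\mathbb{F}_p}$, and localizing at the classes of $\pc_\beta, \pc_\gamma$ (which commutes with the quotient by $p$) gives
\[
\openrichring/(p) \;\cong\; \openrichring_{\mathbb{F}_p}.
\]
By Lemma~\ref{lem:integral-over-z} applied to the field $\mathbb{F}_p$ (or rather, by its well-known field analog quoted in its proof), $\openrichring_{\mathbb{F}_p}$ is an integral domain. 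Hence $(p)$ is a prime ideal of $\openrichring$ and $p$ is a prime element.

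With the $p$'s prime and $S^{-1}\openrichring$ a UFD, Nagata's criterion (Lemma~\ref{lem:Nagata}) immediately yields that $\openrichring$ is a UFD over $\mathbb{Z}$. I do not foresee a real obstacle here; the only thing to be a bit careful about is verifying that open Richardson varieties are compatible with base change along $\mathbb{Z} \to \mathbb{F}_p$, which is precisely what flatness of $\richring$ over $\mathbb{Z}$ (already established by the Hilbert-function argument) provides. Everything else is formal.
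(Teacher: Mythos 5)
Your proof is correct and follows essentially the same route as the paper: apply Nagata's criterion with the multiplicative set generated by the rational primes $p$, using Lemma~\ref{lem:integral-over-z} to see that $\openrichring$ is a domain and that each $p$ is prime because $\openrichring \otimes_{\mathbb{Z}} \mathbb{F}_p$ is again a domain. Your aside about flatness is not actually needed for the identification $\openrichring/(p) \cong \openrichring_{\mathbb{F}_p}$ (that is automatic from right-exactness of tensor and the fact that the defining ideals are generated by the same $\mathbb{Z}$-coefficient elements), but this does not affect the argument.
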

\begin{proof}
By Lemma \ref{lem:integral-over-z}, $\openrichring$ is an integral domain. Since $\mathbb{F}_p \otimes \openrichring$ is again integral, the integers $p \in \openrichring$ are prime elements. Thus, by Nagata's Criterion, $\openrichring$ is a UFD if and only if $\mathbb{Q} \otimes \openrichring$ is a UFD.
\end{proof}
In fact, most of the intermediate results of this paper are valid with $\field = \mathbb{Z}$. The exception is Theorem \ref{thm:KLS} on integrality (in particular, torsion-freeness) of positroid varieties. This step would follow if their Hilbert functions were known to be the same over any field (e.g. by Grobner basis arguments).

For the remainder of the paper, we work over an arbitrary field.

\section{A parameterized open subscheme}

Consider the open subscheme of $\openrich$,
\[
   \paramrich 
   = \Proj \paramrichring = \Spec \paramrichring/\langle \pc_\gamma -1\rangle
\,.
\]
In this section, we show that $\paramrich$ is parameterized by matrices of the form in Figure~\ref{fig:parameterized}; in particular, it is a product of affine space and a torus. We thereby deduce that $\paramrichring$ is a unique factorization domain.

Let $\paramrichmat$ denote the space of matrices $M$ shown in Figure~\ref{fig:parameterized}. Explicitly, for each $i$ we have $M_{i, \beta(i)}$ invertible, $M_{i, \gamma(i)} = 1$, $M_{i,j}$ arbitrary for $\beta(i) < j < \gamma(i)$, and $M_{i,j} = 0$ otherwise.

\begin{theorem}
\label{thm:paramrich}
The natural map $\phi: \paramrichmat \to \paramrich$
(defined by the ring map $\pc_\alpha \mapsto \det M_\alpha$) is an isomorphism.
\end{theorem}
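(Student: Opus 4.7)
My plan is to prove the theorem in three steps: (i) $\phi$ is a well-defined morphism, (ii) $\phi$ is injective on points, and (iii) $\phi$ admits an explicit regular inverse. The unifying observation is that the staircase support pattern of $M$ makes the submatrices $M_\gamma$, $M_\beta$, and $M_{\delta_t}$ block triangular. For (i), if $\alpha \notin [\beta, \gamma]$, say with $\alpha(i) < \beta(i)$, then rows $i, \dots, k$ of $M_\alpha$ all vanish in the first $i$ columns of the submatrix and are therefore linearly dependent, so $\det M_\alpha = 0$ (the case $\alpha \nleq \gamma$ is symmetric). Next, $M_{\delta_t}$ is block diagonal: the top-left $t \times t$ block (rows $1, \dots, t$, columns $\beta(1), \dots, \beta(t)$) is upper triangular with diagonal entries $M_{i,\beta(i)}$, and the bottom-right $(k-t) \times (k-t)$ block (rows $t{+}1, \dots, k$, columns $\gamma(t{+}1), \dots, \gamma(k)$) is lower triangular with $1$'s on the diagonal. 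Thus $\det M_{\delta_t} = \prod_{i \le t} M_{i, \beta(i)}$ is invertible in the coordinate ring of $\paramrichmat$, and in particular $\det M_\gamma = 1$. The \Plucker relations hold automatically for the maximal minors of any matrix.

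For (ii), if $M, M' \in \paramrichmat$ have the same row space then $M' = gM$ for a unique $g \in \GL_k$, and I would show $g = I$ by applying the structural observations above. Concretely, $M_\gamma$ and $M'_\gamma$ are both lower triangular with $1$'s on the diagonal, so $g = M'_\gamma M_\gamma^{-1}$ is as well. Symmetrically, $M_\beta$ and $M'_\beta$ are upper triangular with invertible diagonal entries, so $g = M'_\beta M_\beta^{-1}$ is upper triangular. Hence $g$ is diagonal with $1$'s on the diagonal, so $g = I$.

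For (iii), I would send a $k$-plane $V \in \paramrich$ to the matrix whose $i$-th row $v_i$ is the unique element of $V$ with support in $[\beta(i), \gamma(i)]$ and $v_{i, \gamma(i)} = 1$. Existence and uniqueness of $v_i$, together with invertibility of $v_{i, \beta(i)}$, all follow from non-vanishing of the $\pc_{\delta_t}$'s. Indeed, because $\pc_{\delta_{i-1}}(V) \ne 0$ and $\delta_{i-1} \subseteq [1, \beta(i){-}1] \cup [\gamma(i), n]$, the projection of $V$ onto these columns has full rank $k$; meanwhile every $k$-subset of the smaller column set $[1, \beta(i){-}1] \cup [\gamma(i){+}1, n]$ lies outside $[\beta, \gamma]$ and hence gives a vanishing \Plucker coordinate on $V$, so the projection onto the smaller set has rank $k{-}1$. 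Together these force $V$ to meet the subspace ``support $\subseteq [\beta(i), \gamma(i)]$'' in a unique line with nonzero $\gamma(i)$-coordinate, determining $v_i$. Analogously, $\pc_{\delta_i}(V) \ne 0$ with $\delta_i \subseteq [1, \beta(i)] \cup [\gamma(i){+}1, n]$ forces $v_{i, \beta(i)} \ne 0$. Cramer-type formulas then express the entries of $M$ as rational functions of the \Plucker coordinates of $V$ with denominators among the $\pc_{\delta_t}$'s, yielding the required inverse morphism. The main obstacle is this third step: while the linear-algebra argument is clean, producing a genuine morphism (not just a set-theoretic map on closed points) and verifying that the constructed matrix actually lies in $\paramrichmat$ requires writing out the explicit \Plucker formulas and tracking which $\pc_{\delta_t}$ denominators suffice.
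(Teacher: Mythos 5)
Your approach genuinely differs from the paper's, and the key idea in step (iii) is sound, but as you acknowledge it stops short of a proof. The paper constructs the inverse morphism not row by row via Cramer's rule, but all at once via an $LU$ decomposition: after passing to the reverse reduced echelon representative $N$ (with $N_\gamma = I$), the non-vanishing of the $\pc_{\delta_t}$ is exactly the statement that the leading principal minors of $N_\beta$ are units, whence $N_\beta = LDU$ uniquely with $L$ lower unipotent, $D$ invertible diagonal, $U$ upper unipotent, and the entries of $L$ are rational functions of $N$ with only the $\pc_{\delta_t}$ as denominators. The inverse map is then $\psi(N) = L^{-1}N$, and the check that $\psi(N) \in \paramrichmat$ follows from $\psi(N)_\beta = DU$, $\psi(N)_\gamma = L^{-1}$, and a short induction using vanishing \Plucker minors for the remaining zero entries. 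This produces the morphism in one stroke, with the denominator control built into the decomposition.

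To complete your route you would need the explicit Cramer formula: the $j$-th entry of your row vector $v_i$ is $v_{i,j} = \pm\, \pc_{\delta_{i-1}\setminus\{\gamma(i)\}\cup\{j\}} \big/ \pc_{\delta_{i-1}}$. One then verifies the support pattern by checking that $\delta_{i-1}\setminus\{\gamma(i)\}\cup\{j\} \notin [\beta,\gamma]$ whenever $j \notin [\beta(i),\gamma(i)]$, and that $v_{i,\beta(i)} = \pm\, \pc_{\delta_i} / \pc_{\delta_{i-1}}$ is a unit; this gives a genuine ring map and hence a morphism, so the gap is fillable, just at the cost of assembling the rows one at a time. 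Your steps (i) and (ii) are correct --- the block-diagonal computation $\det M_{\delta_t} = \prod_{i\le t} M_{i,\beta(i)}$ is right --- though step (ii) (injectivity on points) becomes logically redundant once a two-sided inverse is exhibited, which is why the paper omits it.
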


\begin{proof}
Recall that $\openupperschubert$ is parameterized by $k \times n$ matrices $N$ in reverse reduced row echelon form, with submatrix $N_\gamma = I$. We identify $\paramrich$ with the subscheme of $\openupperschubert$ whose minors satisfy the defining conditions of $\paramrich$, i.e. $\det N_{\delta_i}$ is invertible for $i=1, \ldots, k-1$, and $\det N_\alpha = 0$ for $\alpha \notin [\beta, \gamma]$.  With this identification, $\phi$ simply maps $M \in \paramrichmat$ to its reverse reduced row echelon form, which is $\phi(M) = M_\gamma^{-1} M$.  Since $M_\gamma$ is lower unipotent, the entries of $\phi(M)$ are polynomials in the entries of $M$.  Since the minors of any matrix $M \in \paramrichmat$ satisfy the defining conditions of $\paramrich$, we have $\phi(M) \in \paramrich$.

We now describe the inverse map  $\psi : \paramrich \to \paramrichmat$.  Let $N$ be as above, and consider the square submatrix $N_\beta$.  The condition that $\det N_{\delta_i}$ is invertible for $i=1, \ldots, k-1$ asserts (since $N_\gamma = I$) that the top-left-justified minors of $N_\beta$ are nonzero. Therefore $N_\beta$ has a unique decomposition $N_\beta = L D U$ with $L$ lower unipotent, $U$ upper unipotent, and $D$ invertible diagonal. The entries of these matrices are rational functions of $N$, with only the $\det N_{\delta_i}$ as denominators.  We can therefore define $\psi(N) = L^{-1}N$.  

We show that $\psi(N) \in \paramrich$.  Let $M = \psi(N)$.
Immediately, we have that $M_{i, \beta(i)}$ is invertible, and $M_{i, \gamma(i)} = 1$ because $M_\beta = L^{-1}N_\beta = DU$ and $M_\gamma = L^{-1}N_\gamma = L^{-1}$. 
The conditions $M_{i,j} = 0$ if $j > \gamma(i)$ hold because they hold for $M$ and $L^{-1}M$ is obtained from $M$ by downwards row operations.
To that see $M_{i,j} = 0$ for $j < \beta(i)$, first note that if $j \in \beta$, this is true again because $M_{\beta} = DU$.  Now, fix $i$ and $j \notin \beta$, and assume for induction that $M_{i',j} = 0$ for all $i' > i$.  Then for $\alpha = \beta \cup \{j\} \setminus \beta(i)$, we have (by expanding the determinant)
\[
  M_{i,j} = \pm \det M_\alpha \cdot 
   \big(\prod_{i' \neq i} M_{i', \beta(i')}\big)^{-1}
\,.
\]
When $j< \beta(i)$, $\alpha \notin [\beta,\gamma]$, and therefore $\det M_\alpha = \det N_\alpha = 0$; hence $M_{i,j} = 0$.

Finally, if we assume either $M = \psi(N)$ or $N = \phi(M)$, we have $M_\gamma = L^{-1}$, which shows that $\phi$ and $\psi$ are mutually inverse isomorphisms.
\end{proof}

\begin{corollary}
\label{cor:localizedufd}
$\paramrichring$ is an unique factorization domain.
\end{corollary}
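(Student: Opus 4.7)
The plan is to invoke Theorem~\ref{thm:paramrich} to identify $\paramrich$ with the explicit matrix space $\paramrichmat$, observe that $\paramrichmat$ is manifestly a product of an affine space and a torus (hence has a coordinate ring that is transparently a UFD), and then lift the conclusion from the affine scheme $\paramrich$ back to the $\ZZ$-graded ring $\paramrichring$.

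First, I would read off directly from the description in Figure~\ref{fig:parameterized} that $\paramrichmat \cong \mathbb{A}^N \times \mathbb{G}_m^r$ as schemes, where $N$ counts the ``interior'' star positions $(i,j)$ with $\beta(i) < j < \gamma(i)$ and $r$ counts the indices $i$ with $\beta(i) < \gamma(i)$ (the $*'$ entries that are free invertible scalars, as opposed to being forced to equal $1$). Its coordinate ring is therefore the Laurent polynomial ring
\[
   \field[y_1, \ldots, y_N, x_1^{\pm 1}, \ldots, x_r^{\pm 1}],
\]
which is a UFD since it is a localization of the polynomial ring $\field[y_1, \ldots, y_N, x_1, \ldots, x_r]$ at the prime elements $x_1, \ldots, x_r$.

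Next, combining Theorem~\ref{thm:paramrich} with Remark~\ref{rmk:proj}, we have an isomorphism $\paramrich = \Spec \paramrichring/\langle \pc_\gamma - 1 \rangle \cong \paramrichmat$ of affine schemes. Hence $\paramrichring/\langle \pc_\gamma - 1 \rangle$ is isomorphic to the UFD above. It remains to pass from this degree-$0$ quotient back to the full $\ZZ$-graded ring $\paramrichring$: since $\pc_\gamma$ is a unit of degree $1$ in $\paramrichring$, each homogeneous element of degree $d$ can be uniquely written as $\pc_\gamma^d \cdot f$ with $f \in (\paramrichring)_0$, giving
\[
   \paramrichring \cong \bigl(\paramrichring/\langle \pc_\gamma - 1 \rangle\bigr)\bigl[\pc_\gamma^{\pm 1}\bigr].
\]
A Laurent polynomial ring in one variable over a UFD is again a UFD, completing the argument.

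There is no substantive obstacle here: the real work was done in Theorem~\ref{thm:paramrich}, and the only care required is the routine bookkeeping to translate between $\Proj$ and $\Spec$ for the $\ZZ$-graded ring $\paramrichring$, whose irrelevant ideal is all of $\langle 1 \rangle$.
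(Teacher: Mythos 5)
Your proof is correct and follows essentially the same route the paper takes: read off from Theorem~\ref{thm:paramrich} that $\paramrich \cong \paramrichmat$ is a product of affine space and a torus (so its coordinate ring is a Laurent polynomial ring, hence a UFD), then pass from the degree-zero slice back to the $\ZZ$-graded ring via the unit $\pc_\gamma$ as in Remark~\ref{rmk:proj}. The paper leaves most of this implicit, but your bookkeeping with $\paramrichring \cong \bigl(\paramrichring/\langle \pc_\gamma - 1\rangle\bigr)[\pc_\gamma^{\pm 1}]$ is exactly the intended argument.
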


\section{Some implications of the \Plucker relations}

To complete the proof of Proposition~\ref{prop:UFD}, we must show that $\pc_{\delta_t}$ is either a unit or a prime of $\openrichring$. To this end, we study the principal ideal $\langle \pc_{\delta_t} \rangle \subseteq \openrichring$. We begin by identifying other \Plucker coordinates that must be in this ideal. Let
\[
\calP_t 
= \{\alpha \in [\beta, \gamma] \mid \alpha \cap [\beta(t+1), \gamma(t)] 
\neq \emptyset\}
\,,
\]
and
\[
  \overline\calP_t = [\beta, \gamma] \setminus \calP_t
\,.
\]
Note that $\delta_t \in \overline\calP_t$. Also note that the interval $[\beta(t+1), \gamma(t)]$, which appears in definition of $\calP_t$, is empty if $\beta(t+1) > \gamma(t)$; in this case $\calP_t$ is also empty.

\begin{lemma}
\label{lem:principalideal}
If $\alpha \in \overline\calP_t$, then $\pc_\alpha \in \langle \pc_{\delta_t} \rangle$.
\end{lemma}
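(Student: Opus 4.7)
The plan is to induct on the combinatorial distance
$$
d(\alpha) = \sum_{i=1}^t \bigl(\alpha(i) - \beta(i)\bigr) + \sum_{i=t+1}^k \bigl(\gamma(i) - \alpha(i)\bigr)
$$
from $\alpha$ to $\delta_t$. The basic observation is that every $\alpha \in \overline\calP_t$ splits into a \emph{low} part $\{\alpha(1), \dots, \alpha(t)\} \subseteq [1, \beta(t{+}1){-}1]$ and a \emph{high} part $\{\alpha(t{+}1), \dots, \alpha(k)\} \subseteq [\gamma(t){+}1, n]$, and $\delta_t$ is the unique element of $\overline\calP_t$ with the leftmost low part $\{\beta(1), \dots, \beta(t)\}$ and the rightmost high part $\{\gamma(t{+}1), \dots, \gamma(k)\}$. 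Consequently $d(\alpha) \ge 0$ on $\overline\calP_t$, with equality iff $\alpha = \delta_t$, which handles the base case.

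For the inductive step, suppose $\alpha \neq \delta_t$. Then either (A) $\alpha(i) > \beta(i)$ for some $i \le t$, or (B) $\alpha(i) < \gamma(i)$ for some $i > t$. I treat case (A); case (B) is symmetric. Let $i_0$ be the smallest index $i \le t$ with $\alpha(i) > \beta(i)$, so $\alpha(j) = \beta(j)$ for all $j < i_0$, and in particular $\beta(i_0) \in \beta \setminus \alpha$. Apply the \Plucker relation~\eqref{eqn:pluckerrelation} to $\pc_\alpha \pc_\beta$ with $i^* := \beta(i_0)$:
$$
\pc_\alpha \pc_\beta \;=\; \sum_{j \in \alpha \setminus \beta} \pm\, \pc_{\alpha \setminus \{j\} \cup \{\beta(i_0)\}}\, \pc_{\beta \setminus \{\beta(i_0)\} \cup \{j\}}.
$$
For each $j$, set $\alpha' := \alpha \setminus \{j\} \cup \{\beta(i_0)\}$. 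If $j > \gamma(t)$ (i.e. $j$ is in the high part), then $\alpha'$ has $t{+}1$ elements strictly below $\beta(t{+}1)$, forcing $\alpha'(t{+}1) < \beta(t{+}1)$ and hence $\alpha' \not\ge \beta$, so $\pc_{\alpha'} = 0$ in $\richring$. If $j < \beta(t{+}1)$ (low part), then $j = \alpha(i')$ for some $i' \ge i_0$ (since $\alpha(i') = \beta(i')$ whenever $i' < i_0$), so $j \ge \alpha(i_0) > \beta(i_0)$; then either $\alpha' \notin [\beta, \gamma]$ (so $\pc_{\alpha'} = 0$) or $\alpha' \in \overline\calP_t$ (since $\beta(i_0) \notin [\beta(t{+}1),\gamma(t)]$ and the low/high structure is preserved) with $d(\alpha') = d(\alpha) - (j - \beta(i_0)) < d(\alpha)$, whence $\pc_{\alpha'} \in \langle \pc_{\delta_t} \rangle$ by the inductive hypothesis. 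Every surviving term on the right-hand side thus lies in $\langle \pc_{\delta_t} \rangle$; since $\pc_\beta$ is a unit in $\openrichring$, so does $\pc_\alpha$. Case (B) is identical after substituting $\gamma$ for $\beta$ and taking $i_0$ to be the \emph{largest} index $i > t$ with $\alpha(i) < \gamma(i)$, using that $\pc_\gamma$ is a unit in $\openrichring$.

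The main obstacle is the combinatorial bookkeeping: I must verify that the \Plucker expansion produces only terms that either vanish in $\richring$ or are strictly closer to $\delta_t$. The extremal choice of $i^*$ is essential---taking $\beta(i_0)$ to be the smallest element of $\beta$ missing from $\alpha$ (respectively $\gamma(i_0)$ the largest element of $\gamma$ missing from $\alpha$) is precisely what guarantees that each low/low swap strictly decreases $d$, while each mixed low/high swap pushes $\alpha'$ out of $[\beta, \gamma]$.
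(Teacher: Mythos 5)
Your proof is correct and takes essentially the same route as the paper. The paper works with the partial order $\preceq_t$ on $\overline\calP_t$ (with $\delta_t$ as unique minimum) and argues by taking a minimal counterexample; your potential function $d$ is a rank-like functional that strictly decreases along $\prec_t$, so the well-founded induction is the same. The choice of pivot $i^* = \beta(i_0)$ (resp.\ $\gamma(i_0)$), the application of the \Plucker relation \eqref{eqn:pluckerrelation} with $\pc_\beta$ (resp.\ $\pc_\gamma$) a unit, and the dichotomy for each term (either $\alpha' \notin [\beta,\gamma]$, hence $\pc_{\alpha'} = 0$ in $\openrichring$, or $\alpha' \in \overline\calP_t$ with $\alpha' \prec_t \alpha$) are all exactly as in the paper. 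If anything, you spell out more carefully than the paper does why a high-to-low swap forces $\alpha' \not\geq \beta$ (by an element count across $\beta(t{+}1)$) and why a low-to-low swap lands either outside $[\beta,\gamma]$ or back in $\overline\calP_t$ with strictly smaller $d$; the paper leaves those verifications to the reader, and it writes $i$ where it means the element $\beta(i)$, a small notational conflation that you resolve cleanly by introducing $i^*$.
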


\begin{proof}
First, note that if $\alpha \in \overline\calP_t$, we must have $\alpha(1) < \dots < \alpha(t) < \beta(t+1)$ and $\gamma(t) < \alpha(t+1) < \dots < \alpha(k)$. We define a new partial order $\preceq_t$ on $\overline\calP_t$: if $\alpha, \alpha' \in \overline\calP_t$, we put $\alpha \preceq_t \alpha'$ if $\alpha(i) \leq \alpha'(i)$ for $i=1, \dots, t$, and $\alpha(i) \geq \alpha'(i)$ for $i=t+1, \dots, k$. This makes $\delta_t$ the unique minimal element of $\overline\calP_t$ with respect to $\preceq_t$.

Let $\calQ_t = \{\alpha \in \overline\calP_t \mid
\pc_\alpha \notin \langle \pc_{\delta_t} \rangle\}$. We must show
that $\calQ_t$ is empty.
Suppose to the contrary that $\calQ_t \neq \emptyset$, and take
$\alpha \in \calQ_t$ which is minimal with respect to $\preceq_t$.
Certainly $\alpha \neq \delta_t$, so we have 
either 
\begin{packedenumi}
\item $\alpha(i) > \beta(i)$ for some $i \leq t$, or 
\item $\alpha(i) <\gamma(i)$ for some $i \geq t+1$.
\end{packedenumi}

Suppose (i) is true.  Take $i$ to be the smallest integer such
that $\alpha(i) > \beta(i)$, and consider the \Plucker relation
\eqref{eqn:pluckerrelation}. Notice that for all $j \in \alpha \setminus 
\beta$, we have either
$\alpha {\setminus} \{j\} {\cup} \{i\} \prec_t \alpha$ or
$\alpha {\setminus} \{j\} {\cup} \{i\} \notin [\beta,\gamma]$.
In either case, we have
$\pc_{\alpha \setminus \{j\} \cup \{i\}} \in 
\langle \pc_{\delta_t}\rangle$ (in case (i) because of the
minimality of $\alpha$, and in case (ii) because this \Plucker 
coordinate is already zero in $\openrichring$).  
Since $\pc_\beta$ is
a unit in $\openrichring$, 
we deduce from~\eqref{eqn:pluckerrelation} that
$\pc_\alpha \in \langle \pc_{\delta_t}\rangle$.  Hence 
$\alpha \notin \calQ_t$, for a contradiction.

If (ii) is true, a similar argument applies.  Take $i$ to be the largest 
integer such that $\alpha(i) < \gamma(i)$, and use 
$\gamma$ in place of $\beta$ in the \Plucker relation.
\end{proof}

\begin{corollary}
\label{cor:unit}
If $\calP_t$ is empty, then $\pc_{\delta_t}$ is a unit in 
$\openrich$.
\end{corollary}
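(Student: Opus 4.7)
The plan is to deduce this immediately from Lemma~\ref{lem:principalideal}. Assume $\calP_t = \emptyset$, so that $\overline\calP_t = [\beta,\gamma]$ consists of the entire interval; in particular $\beta \in \overline\calP_t$ (and likewise $\gamma$).

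Applying Lemma~\ref{lem:principalideal} with $\alpha = \beta$, we conclude that $\pc_\beta \in \langle \pc_{\delta_t}\rangle$ inside $\openrichring$. But by the very definition of $\openrichring = \richring[\pc_\beta^{-1}, \pc_\gamma^{-1}]$, the \Plucker coordinate $\pc_\beta$ is a unit. Therefore the ideal $\langle \pc_{\delta_t}\rangle$ contains a unit, hence is the entire ring, forcing $\pc_{\delta_t}$ itself to be a unit.

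There is no real obstacle: once Lemma~\ref{lem:principalideal} is in hand, the corollary is just the observation that when $\calP_t$ is empty, the conclusion of the lemma applies to the elements $\beta$ and $\gamma$ of $[\beta,\gamma]$, which are already inverted in $\openrichring$.
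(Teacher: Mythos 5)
Your proof is correct and matches the paper's argument essentially verbatim: both apply Lemma~\ref{lem:principalideal} to $\alpha = \beta$ (noting that when $\calP_t = \emptyset$ the lemma applies to every $\alpha \in [\beta,\gamma]$), and then use that $\pc_\beta$ is a unit in $\openrichring$ to conclude $\pc_{\delta_t}$ is a unit. No differences worth noting.
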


\begin{proof}
By Lemma~\ref{lem:principalideal}, if $\calP_t$ is empty, we have
$\pc_\alpha \in \langle \pc_{\delta_t} \rangle$ for all 
$\alpha \in [\beta, \gamma]$.  In particular, 
$\pc_\beta \in \langle \pc_{\delta_t} \rangle$. As $\pc_\beta$ is a
unit, this implies that $\pc_{\delta_t}$ is also a unit.
\end{proof}

\section{Positroids}

We now show that when $\calP_t$ is non-empty, it is (the set of bases of) a positroid.  The concept of a positroid comes from the theory of total non-negativity: a \emph{positroid} is a matroid that is representable by a totally non-negative matrix. However, we will not need this definition. There are many equivalent characterizations of positroids; for our purposes, Theorem~\ref{thm:bruhat} below may be taken as a definition.

Let $\symgroup$ denote the symmetric group of permutations of $[n]$, generated by the simple transpositions $s_1, \dots, s_{n-1} \in \symgroup$. There is a natural projection map $\pi_k : \symgroup \to \allsubsets$, given by
\[
  \pi_k(w) = \{w(1), \dots, w(k)\}
\,.
\]
The \emph{Bruhat order} on $\symgroup$ is the partial order characterized by the fact that $u \leq v$ iff $\pi_j(u) \leq \pi_j(v)$ for all $j \in [n]$.

\begin{theorem}[Postnikov \cite{Postnikov}]
\label{thm:bruhat}
A subset $\calM \subseteq \allsubsets$ is a positroid if and only if $\calM = \pi_k [u,v]$ for some $u, v \in \symgroup$, $u \leq v$.
\end{theorem}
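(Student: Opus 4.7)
This statement is due to Postnikov and my primary plan is to cite \cite{Postnikov}. For a self-contained argument, I would work through the geometric picture of positroid varieties developed by Knutson--Lam--Speyer, which ties the combinatorial notion of positroid directly to Bruhat projections.

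For the direction $\Leftarrow$, I would begin with $u \leq v$ in $\symgroup$ and form the Richardson variety $X_u^v \subseteq \mathrm{Fl}(n)$ in the complete flag variety, together with its image $\Pi_{u,v}$ under the canonical projection $\pi : \mathrm{Fl}(n) \to \Gr(k,n)$. The key combinatorial computation is that the nonvanishing \Plucker coordinates on $\Pi_{u,v}$ are indexed precisely by $\pi_k[u,v]$; this follows from the fact that $\pc_\alpha$ does not vanish identically on the Schubert cell indexed by $w$ iff $\pi_k(w) \leq \alpha$, together with the dual statement for opposite Schubert cells, and the observation that $\pi_k$ is order-preserving on Bruhat intervals in a way compatible with intersection. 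One then invokes the Knutson--Lam--Speyer identification of $\Pi_{u,v}$ as a positroid variety, so that $\pi_k[u,v]$ is exactly the set of bases of a positroid.

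For the direction $\Rightarrow$, I would use Postnikov's bijection between rank-$k$ positroids on $[n]$ and decorated permutations of $[n]$ with $k$ anti-exceedances. Given a positroid $\calM$, its Grassmann necklace determines a decorated permutation $\pi_\calM$, which in turn pins down a canonical pair $(u, v) \in \symgroup^2$ with $u \leq v$ (essentially the minimal and maximal coset representatives, with respect to the parabolic stabilizing $\{1, \dots, k\}$, associated to $\pi_\calM$). The verification $\pi_k[u,v] = \calM$ then reduces to matching the cyclic rank data of $\calM$ to the Bruhat interval, which is bookkeeping once the decorated permutation is in hand.

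The main obstacle is that the underlying equivalence between the total-positivity definition of positroid and the \Plucker-support characterization coming from Bruhat projections is itself a substantial theorem, and I would not attempt to reprove it; I would cite \cite{Postnikov} (and, for the positroid-variety half, the Knutson--Lam--Speyer framework). Since the authors use Theorem~\ref{thm:bruhat} only as a working definition of positroid for later arguments, invoking \cite{Postnikov} directly is the practical move here.
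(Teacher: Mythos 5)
The paper offers no proof of this theorem --- it cites Postnikov and takes the Bruhat-interval characterization as a working definition of positroid --- so your primary plan of citing \cite{Postnikov} matches the paper exactly. Your auxiliary sketch glosses over a real subtlety (the \Plucker support of the projected Richardson $\Pi_{u,v}$ is $\pi_k[u,v]$, which is in general strictly smaller than $[\pi_k(u),\pi_k(v)]$ and does not follow from the order-preservation you invoke; one needs the cyclic Grassmann-necklace description), but since you explicitly defer to the citation rather than attempt a full reproof, this does not affect the adequacy of the proposal.
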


A permutation $w \in \symgroup$ is a \emph{Grassmannian permutation} if $w(1) < \dots < w(k)$, and $w(k+1) < \dots < w(n)$. For every $\alpha \in \allsubsets$, there is a unique Grassmannian permutation $w_\alpha \in \symgroup$ such that $\pi_k(w_\alpha) = \alpha$.   It has the property that $w_\alpha(i) = \alpha(i)$, for $i=1, \dots, k$.  Futhermore, we have $\pi_k[w_\alpha, w_{\alpha'}] = [\alpha, \alpha']$, for all $\alpha, \alpha' \in \allsubsets$.

\begin{theorem}
\label{thm:positroidset}
$\calP_t = \pi_k [w_\beta s_t, w_\gamma]$.  Hence, $\calP_t$ is either empty or it is a positroid.  
\end{theorem}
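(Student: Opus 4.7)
The plan is to prove the equality $\calP_t = \pi_k[w_\beta s_t, w_\gamma]$ directly; the second assertion of the theorem is then immediate from Theorem~\ref{thm:bruhat}, since the image of a nonempty Bruhat interval under $\pi_k$ is a positroid. The crucial preliminary computation is that $w_\beta s_t$ differs from the Grassmannian permutation $w_\beta$ only in positions $t$ and $t+1$, so $\pi_j(w_\beta s_t) = \pi_j(w_\beta)$ for all $j \neq t$, while $\pi_t(w_\beta s_t) = \{\beta(1), \ldots, \beta(t-1), \beta(t+1)\}$. In particular $\pi_k(w_\beta s_t) = \beta$, and $w_\beta \leq w_\beta s_t$ in Bruhat order.

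For the inclusion $\pi_k[w_\beta s_t, w_\gamma] \subseteq \calP_t$, I would take any $u$ in the Bruhat interval and set $\alpha = \pi_k(u)$; it is immediate that $\alpha \in [\beta, \gamma]$. Since $\pi_t(u) \subseteq \pi_k(u) = \alpha$, its largest element is some $\alpha(j)$, and by the sorted-entry-wise description of the Bruhat order on $\allsubsets$, the chain $\pi_t(w_\beta s_t) \leq \pi_t(u) \leq \pi_t(w_\gamma)$ forces $\beta(t+1) \leq \alpha(j) \leq \gamma(t)$. This puts $\alpha(j) \in \alpha \cap [\beta(t+1), \gamma(t)]$, so $\alpha \in \calP_t$.

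For the reverse inclusion, given $\alpha \in \calP_t$ I would split into cases based on whether $\alpha(t) \geq \beta(t+1)$. If so, then $\alpha(t) \in [\beta(t+1),\gamma(t)]$ and $w_\alpha$ itself lies in $[w_\beta s_t, w_\gamma]$ by routine $\pi_j$-comparison against the formulas above. Otherwise, $\alpha \in \calP_t$ forces the witness element of $\alpha \cap [\beta(t+1), \gamma(t)]$ to be $\alpha(t+1)$, giving both $\alpha(t+1) \geq \beta(t+1)$ and $\alpha(t+1) \leq \gamma(t)$; I would then take $u = w_\alpha s_t$ and verify $u \in [w_\beta s_t, w_\gamma]$, which again reduces to a $\pi_t$-comparison that uses exactly these two inequalities. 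The main obstacle is this second case: $w_\alpha$ itself is not in the interval, so one must exploit the fiber of $\pi_k$ over $\alpha$ beyond its minimal Grassmannian representative to find a valid preimage.
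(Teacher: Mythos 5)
Your proposal is correct and takes essentially the same route as the paper: both directions use the elementwise Bruhat comparison at level $t$ after computing $\pi_t(w_\beta s_t)$ and $\pi_t(w_\gamma)$, and the reverse inclusion is handled by the same case split (the paper phrases it as $\alpha(t) \in [\beta(t+1),\gamma(t)]$ versus $\alpha(t+1) \in [\beta(t+1),\gamma(t)]$, which is logically the same as your dichotomy on $\alpha(t) \geq \beta(t+1)$) with the identical choice of preimage $w_\alpha$ or $w_\alpha s_t$. The only cosmetic difference is that your forward argument isolates the maximum element of $\pi_t(u)$, while the paper just says some $u(i)$ with $i \leq t$ lands in the interval.
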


\begin{proof}
Suppose $\alpha \in \pi_k[w_\beta s_t, w_\gamma]$.  
Since $[w_\beta s_t, w_\gamma] \subseteq [w_\beta, w_\gamma]$,
we have that $\alpha \in \pi_k [w_\beta, w_\gamma] = [\beta, \gamma]$.
Next, write $\alpha = \pi_k(u)$, $u \in [w_\beta s_t, w_\gamma]$.
Note that $\pi_t(w_\beta s_t) = 
\{\beta(1) , \dots , \beta(t-1) , \beta(t+1)\}$,  and
$\pi_t(w_\gamma) = \{\gamma(1) , \dots , \gamma(t)\}$.  It follows that
\[
\{\beta(1) , \dots , \beta(t-1) , \beta(t+1)\} \leq
\pi_t(u) 
\leq
\{\gamma(1), \dots, \gamma(t-1), \gamma(t)\}
\,.
\]
Therefore, $\beta(t+1) \leq u(i) \leq \gamma(t)$ for some $i \leq t$.
Since $u(i) \in \alpha$, we deduce that
$\alpha \cap [\beta(t+1), \gamma(t)] \neq \emptyset$.
Since $\alpha \in [\beta,\gamma]$ and 
$\alpha \cap [\beta(t+1), \gamma(t)] \neq \emptyset$, we have
$\alpha \in \calP_t$.

Now, suppose $\alpha \in \calP_t$. 
Since $\alpha(t) \leq \gamma(t)$,
if $\alpha(t) \notin [\beta(t+1),\gamma(t)]$ we must have
$\alpha(1) < \dots < \alpha(t) < \beta(t+1)$.  Similarly, since
$\alpha(t+1) \geq \beta(t+1)$, if $\alpha(t+1) \notin
[\beta(t+1),\gamma(t)]$ we must have 
$\gamma(t) > \alpha(t+1) > \dots > \alpha(k)$.  Since 
$\alpha \cap [\beta(t+1), \gamma(t)] \neq \emptyset$,
we must have either 
\begin{packedenumi}
\item $\alpha(t) \in [\beta(t+1), \gamma(t)]$, or
\item $\alpha(t+1) \in [\beta(t+1), \gamma(t)]$.  
\end{packedenumi} From here, one can easily check 
that in case (i), $w_\alpha \in [w_\beta s_t, w_\gamma]$, and in case (ii),
$w_\alpha s_t \in [w_\beta s_t, w_\gamma]$.  Since
$\pi_k(w_\alpha) = \pi_k(w_\alpha s_t) = \alpha$, this shows that
$\alpha \in \pi_k [w_\beta s_t, w_\gamma]$.
\end{proof}

\section{Positroid varieties}

For $\calM \subseteq \allsubsets$, we can consider the scheme defined by the vanishing of \Plucker coordinates not in $\calM$:
\[
    \matroidvariety{\calM} = 
    \Proj \grring/\langle \pc_\alpha \mid \alpha \notin \calM \rangle
\,.
\]
Such schemes are unions of strata in the GGMS stratification of the Grassmmannian \cite{GGMS}; in general, they can be quite badly behaved, even if $\calM$ is a matroid (see e.g. discussion in \cite[\S1]{KLS}). However, when $\calM$ is a  positroid $\matroidvariety{\calM}$ has many nice properties, including the following.

\begin{theorem}[Knutson--Lam--Speyer \cite{KLS}]
\label{thm:KLS}
If $\calM \subseteq \allsubsets$ is a positroid, then $\matroidvariety{\calM}$ is an integral scheme.
\end{theorem}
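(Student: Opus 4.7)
The plan is to realize $\matroidvariety{\calM}$ as the scheme-theoretic image of a Richardson variety in the full flag variety, and to deduce both irreducibility (from that of Richardson varieties) and reducedness (from Frobenius splitting).

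First, by Theorem~\ref{thm:bruhat} one writes $\calM = \pi_k[u,v]$ for some Bruhat interval $[u,v]$ in $\symgroup$. Let $\mathrm{Fl}(n)$ denote the full flag variety, let $\pi \colon \mathrm{Fl}(n) \to \Gr(k,n)$ be the projection $F_\bullet \mapsto F_k$, and let $R_{u,v} \subseteq \mathrm{Fl}(n)$ denote the closed Richardson variety $X_u \cap X^v$. I would define the positroid variety geometrically as $\Pi_\calM := \pi(R_{u,v})$, the scheme-theoretic image. Richardson varieties in $\mathrm{Fl}(n)$ are classically known to be irreducible (in fact normal and Cohen--Macaulay); as the scheme-theoretic image of an irreducible scheme, $\Pi_\calM$ is irreducible. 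To match $\Pi_\calM$ with $\matroidvariety{\calM}$ set-theoretically, I would verify that a point $V \in \Gr(k,n)$ lies in $\Pi_\calM$ exactly when $\pc_\alpha(V) = 0$ for every $\alpha \notin \calM$. The forward direction is easy: if $F_\bullet \in R_{u,v}$, then the Schubert position of $F_k$ lies in $\pi_k[u,v] = \calM$, which forces $\pc_\alpha(F_k) = 0$ for $\alpha \notin \calM$. The reverse direction reduces to a combinatorial lifting statement in the Bruhat order, showing that any $k$-plane whose matroid is contained in $\calM$ extends to a flag in $R_{u,v}$.

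The main obstacle is reducedness, equivalently showing that the explicit Plücker ideal $\langle \pc_\alpha \mid \alpha \notin \calM \rangle$ equals, rather than is merely contained in, the full ideal of $\Pi_\calM$. The standard tool is Frobenius splitting. In positive characteristic, $\mathrm{Fl}(n)$ carries the canonical Mehta--Ramanathan splitting, which compatibly splits every Richardson variety (Brion--Kumar, Lakshmibai). Pushing this splitting forward by $\pi$ yields a Frobenius splitting of $\Gr(k,n)$ that compatibly splits every positroid variety and also compatibly splits each Plücker divisor $\{\pc_\alpha = 0\}$. Since sums of compatibly split ideals are themselves compatibly split, hence radical, the ideal $\langle \pc_\alpha \mid \alpha \notin \calM \rangle$ is radical; combined with the irreducibility step it is prime, so $\matroidvariety{\calM}$ is integral. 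Characteristic zero (and the case of $\ZZ$) is recovered by spreading out and reduction modulo primes, exactly as in the remark preceding this theorem. The heart of the argument, and the part that genuinely distinguishes positroids from arbitrary matroids, is the compatible splitting: without it one has no way to rule out embedded components or nilpotents in $\grring/\langle \pc_\alpha \mid \alpha \notin \calM \rangle$.
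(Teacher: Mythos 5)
The paper does not prove this statement; it is quoted as an external result of Knutson--Lam--Speyer \cite{KLS}, so there is no internal proof to compare against. Your sketch does follow the KLS strategy (positroid variety as scheme-theoretic image of a Richardson variety in $\mathrm{Fl}(n)$, irreducibility from that, reducedness from Frobenius splitting), and you correctly identify reducedness --- equivalently, that the \Plucker ideal $\langle \pc_\alpha \mid \alpha \notin \calM \rangle$ is radical --- as the real content.

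There is, however, a gap in the reducedness step. You assert that the pushed-forward Mehta--Ramanathan splitting ``compatibly splits each \Plucker divisor $\{\pc_\alpha = 0\}$'' and then conclude radicality by summing the principal ideals $\langle \pc_\alpha\rangle$. This cannot be right for general $k,n$: a single Frobenius splitting of $\Gr(k,n)$ can compatibly split at most the components of one anticanonical divisor, and the anticanonical class has degree $n$ in the \Plucker embedding, while there are $\binom{n}{k}$ \Plucker divisors. What is actually true, and what KLS prove, is more delicate: the standard splitting of the Grassmannian is invariant under the long cycle $c$, so it compatibly splits not just Schubert and opposite Schubert varieties but all \emph{cyclically rotated} Schubert varieties $c^j X_w$ --- that is, the $n$ divisors $c^j X_\sigma$, not all $\binom{n}{k}$ coordinate divisors. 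One then shows that the positroid variety is the set-theoretic intersection of finitely many cyclically rotated Schubert varieties and that the \Plucker ideal $\langle \pc_\alpha \mid \alpha \notin \calM\rangle$ equals the sum of the \Plucker ideals of those rotated Schubert varieties (this is a genuine combinatorial lemma). Since each of those ideals is compatibly split, the sum is compatibly split and hence radical. Your version sums the wrong ideals; the cyclic-invariance observation and the decomposition into rotated Schubert varieties are the missing ingredients that make the argument work.

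One smaller point: your closing remark that ``the case of $\ZZ$ is recovered by spreading out and reduction modulo primes'' glosses over a real difficulty. The paper itself flags precisely this theorem as the step that is \emph{not} known to hold over $\ZZ$, because one would first need to know (e.g.\ via a Gr\"obner basis argument) that the Hilbert function of $\grring/\langle \pc_\alpha \mid \alpha \notin \calM\rangle$ is independent of the ground field; absent that, reduction mod $p$ does not by itself rule out $p$-torsion.
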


When $\calM$ is a positroid, $\matroidvariety{\calM}$ is called a \emph{positroid variety}.   Since every interval $[\beta, \gamma] \subseteq \allsubsets$ is a positroid, every Richardson variety is a positroid variety. Applying Theorem~\ref{thm:KLS} to our situation, we obtain the following.

\begin{theorem}
\label{thm:positroiddivisor}
Suppose $\calP_t$ is non-empty. $\pdivisor$ is a non-empty closed integral subscheme of $\rich$ of codimension $1$. Its interesection with the open Richardson variety, $\openpdivisor  = \pdivisor \cap \openrich$, is a non-empty open subscheme of $\pdivisor$ and a closed integral subcheme of $\openrich$.  Moreover, 
\begin{equation}
\label{eqn:positroiddivisor}
  \openpdivisor
  =
  \Proj \openrichring / \langle \pc_{\delta_t} \rangle
\,.
\end{equation}
\end{theorem}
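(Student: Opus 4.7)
The plan is to treat $\pdivisor$ as a positroid variety and import the structural results from the previous sections. Since $\calP_t$ is non-empty by hypothesis, Theorem~\ref{thm:positroidset} gives $\calP_t = \pi_k[w_\beta s_t, w_\gamma]$, so $\calP_t$ is a positroid; Theorem~\ref{thm:KLS} then implies that $\pdivisor$ is an integral scheme, and in particular non-empty. The inclusion $\calP_t \subseteq [\beta,\gamma]$ realizes $\pdivisor$ as a closed subscheme of $\rich$.

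Next I would pass to $\openpdivisor = \pdivisor \cap \openrich$. For this it suffices to verify that $\beta, \gamma \in \calP_t$, so that $\pc_\beta$ and $\pc_\gamma$ do not vanish identically on the integral scheme $\pdivisor$. This is immediate from the hypothesis $\calP_t \neq \emptyset$, which forces $\beta(t+1) \leq \gamma(t)$: then $\beta(t+1) \in \beta \cap [\beta(t+1),\gamma(t)]$ and $\gamma(t) \in \gamma \cap [\beta(t+1),\gamma(t)]$. Consequently $\openpdivisor$ is a dense, non-empty open subscheme of $\pdivisor$, hence itself integral, and a closed subscheme of $\openrich$.

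For the identification~\eqref{eqn:positroiddivisor}, the subscheme $\openpdivisor$ is by definition cut out in $\openrich$ by the ideal $\langle \pc_\alpha \mid \alpha \in \overline\calP_t\rangle$. Lemma~\ref{lem:principalideal} shows that this ideal is contained in $\langle \pc_{\delta_t}\rangle$, and the reverse containment is trivial since $\delta_t \in \overline\calP_t$; so the two ideals coincide in $\openrichring$. For codimension, I would invoke Krull's principal ideal theorem: $\pc_{\delta_t}$ is a nonzero (since $\delta_t \in [\beta,\gamma]$) non-unit (since $\openpdivisor$ is non-empty) element of the integral Noetherian ring $\openrichring$, so its vanishing locus $\openpdivisor$ has pure codimension $1$ in $\openrich$. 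Because $\openrich \hookrightarrow \rich$ and $\openpdivisor \hookrightarrow \pdivisor$ are both dense open embeddings, $\codim(\pdivisor,\rich) = 1$ as well.

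The main subtlety is that Lemma~\ref{lem:principalideal} produces the principal ideal identification only after inverting $\pc_\beta$ and $\pc_\gamma$; this is precisely why the theorem phrases~\eqref{eqn:positroiddivisor} over $\openrichring$ rather than over $\richring$, and why the codimension argument must be carried out on $\openrich$ first and then transferred by density.
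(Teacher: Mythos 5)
Your proposal is correct and follows essentially the same route as the paper: integrality from Theorem~\ref{thm:positroidset} plus Theorem~\ref{thm:KLS}, the identity~\eqref{eqn:positroiddivisor} from Lemma~\ref{lem:principalideal} together with $\delta_t \in \overline\calP_t$, and codimension~$1$ from the principal ideal. The one place you diverge slightly, and in fact improve on the write-up, is the non-emptiness of $\openpdivisor$: the paper argues that $\calP_t$ is not contained in any proper subinterval of $[\beta,\gamma]$ and hence $\pdivisor$ is not swallowed by any boundary component, whereas you observe directly that $\beta,\gamma\in\calP_t$ (since $\beta(t+1)\le\gamma(t)$), so $\pc_\beta\pc_\gamma$ does not vanish identically on the integral scheme $\pdivisor$ and the basic open set $\openrich=\{\pc_\beta\pc_\gamma\ne 0\}\cap\rich$ meets it. Both are fine; yours avoids any appeal to the description of boundary components.
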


\begin{proof}
The fact that $\pdivisor$ is integral follows from  Theorems~\ref{thm:positroidset} and~\ref{thm:KLS}. Note that in the notation above, $\rich = \matroidvariety{[\beta,\gamma]}$. Since $\calP_t \subseteq [\beta,\gamma]$, $\pdivisor$ is a closed subscheme of $\matroidvariety{[\beta,\gamma]}$. Since $\calP_t$ is not contained in any proper subinterval of $[\beta,\gamma]$, $\pdivisor$ is not properly contained in any of the boundary components of $\openrich$, and therefore the intersection with $\openrich$ is non-empty. The fact that the codimension is $1$ follows from~\eqref{eqn:positroiddivisor}, as $\langle \pc_{\delta_t} \rangle$ is a principal ideal.

As for proving \eqref{eqn:positroiddivisor}, first note that both sides are subschemes of the Grassmannian defined by the vanishing and non-vanishing of certain \Plucker coordinates. $\openpdivisor$ is defined by the vanishing of $\{\pc_\alpha \mid \alpha \in \allsubsets \setminus \calP_t\}$, and the non-vanishing of $\{\pc_\beta, \pc_\gamma\}$. $\Proj \openrichring / \langle \pc_{\delta_t} \rangle$ is defined by the vanishing of $\{\pc_\alpha \mid \alpha \in [\beta, \gamma] \cup \{\pc_{\delta_t}\}\}$, and the non-vanishing of $\{\pc_\beta, \pc_\gamma\}$. Since $[\beta, \gamma] \cup \{\pc_{\delta_t}\} \subseteq \allsubsets 
\setminus \calP_t$, we have
\[
  \openpdivisor 
  \subseteq
  \Proj \openrichring / \langle \pc_{\delta_t} \rangle
\,.
\]
By Lemma~\ref{lem:principalideal}, all \Plucker coordinates which vanish on $\pdivisor$ also vanish on $\Proj \openrichring / \langle \pc_{\delta_t} \rangle$, so we also have the reverse containment.  
\end{proof}

\begin{corollary}
\label{cor:prime}
If $\calP_t$ is non-empty then $\pc_{\delta_t}$ is a prime element of $\openrichring$.
\end{corollary}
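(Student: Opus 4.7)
The plan is to apply the standard characterization that an element $p$ of a domain $R$ is prime if and only if $R/\langle p \rangle$ is a (nonzero) integral domain. We already know $\openrichring$ is an integral domain (integral over the Richardson variety, which is integral over any field) and that $\pc_{\delta_t}$ is a nonzero element of it (since $\delta_t \in [\beta,\gamma]$). So the real content is to transfer the integrality statement from the scheme side of Theorem~\ref{thm:positroiddivisor} back to the ring $\openrichring / \langle \pc_{\delta_t}\rangle$.

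To do this, I would invoke the $\ZZ$-graded $\Proj$–$\Spec$ correspondence recorded in Remark~\ref{rmk:proj}. Let $R = \openrichring/\langle\pc_{\delta_t}\rangle$. This is a $\ZZ$-graded ring in which $\pc_\beta$ is a homogeneous unit of positive degree; hence the irrelevant ideal of $R$ is the unit ideal, so $\Proj R = \Spec R_0$ and $\Spec R = \Spec R_0 \times \mathbb{G}_m$. Combining these identifications with Theorem~\ref{thm:positroiddivisor}, we get
\[
  \Spec \openrichring/\langle \pc_{\delta_t}\rangle \;=\; \openpdivisor \times \mathbb{G}_m\,.
\]
By Theorem~\ref{thm:positroiddivisor}, $\openpdivisor$ is a nonempty integral scheme, and $\mathbb{G}_m$ is geometrically integral over the base field; therefore the product is integral, so $\openrichring/\langle \pc_{\delta_t}\rangle$ is a (nonzero) integral domain. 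This simultaneously shows that $\pc_{\delta_t}$ is not a unit (the quotient is nonzero because $\openpdivisor$ is nonempty) and that $\pc_{\delta_t}$ is prime, finishing the proof.

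The only subtle point is the use of geometric integrality of $\mathbb{G}_m$ to conclude integrality of the product over an arbitrary field; this is not an obstacle, since $\mathbb{G}_m = \Spec \field[t,t^{-1}]$ remains integral after any base extension. Everything else is bookkeeping around the grading: identifying the irrelevant ideal with the unit ideal in $R$, and observing that the homogeneous unit $\pc_\beta$ furnishes the splitting $R = R_0[\pc_\beta, \pc_\beta^{-1}]$ that realizes $\Spec R$ as a product with $\mathbb{G}_m$.
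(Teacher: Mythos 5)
Your proof is correct and takes essentially the same route as the paper: it applies Theorem~\ref{thm:positroiddivisor} together with the $\ZZ$-graded $\Proj$/$\Spec$ correspondence of Remark~\ref{rmk:proj} to conclude that $\openrichring/\langle\pc_{\delta_t}\rangle$ is a nonzero integral domain. The paper states this more tersely (``\ldots is a closed integral subscheme of $\openrich$, which is equivalent''), whereas you spell out the splitting $\Spec R = \Proj R \times \mathbb{G}_m$ furnished by the homogeneous unit $\pc_\beta$ and the resulting transfer of integrality, which is exactly the content the paper is implicitly invoking.
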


\begin{proof}
By Theorem~\ref{thm:positroiddivisor}, $\Proj \openrichring / \langle \pc_{\delta_t} \rangle$ is a closed integral subscheme of $\openrich = \Proj \openrichring$, which is equivalent.
\end{proof}

\section{Remarks}
\label{sec:remarks}

As an additional consequence of Theorems~\ref{thm:paramrich} and~\ref{thm:positroiddivisor}, we obtain a description of the nicely parametrizable scheme $\paramrich$, as the complement of a union of postroid varieties inside the Richardson variety $\rich$.  Let
\begin{gather*} 
\Sigma_0 = \{\calP_t \mid \calP_t \neq \emptyset,\,1 \leq t \leq k-1\} \\
\Sigma_1 = \{[\beta', \gamma] \mid \beta \lessdot \beta' < \gamma\}~ \\
\Sigma_2 = \{[\beta, \gamma'] \mid \beta < \gamma' \lessdot \gamma\}\,. 
\end{gather*}
Here $\lessdot$ denotes the covering relation in the poset $\allsubsets$. Let $\Sigma = \Sigma_0 \cup \Sigma_1 \cup \Sigma_2$.  Each element of $\Sigma$ is a postroid. Then we have
\[
   \paramrich = 
  \rich \setminus \bigcup_{\calM \in \Sigma} \matroidvariety{\calM}
\,.
\]
Note that for $\calM \in \Sigma_1$ or $\calM \in \Sigma_2$, the positroid variety $\matroidvariety{\calM}$ is a Richardson variety: these are the boundary components of $\openrich$.

The varieties $\pdivisor$ admit a couple of other nice descriptions. In general positroid varieties are projections of Richardson varieties in the full flag manifold. However, as $\calP_t = \pi_k[w_\beta s_t, w_\gamma]$ and $w_\beta s_t$ is a permutation with at most two descents, $\pdivisor$ is in fact a projection of a Richardson variety in the two-step flag manifold $\mathrm{Fl}(t,k;n)$.

Alternatively, $\pdivisor$ can be described as the intersection three permuted Schubert varieties.  $\symgroup$ acts on $k \times n$ matrices by permuting the columns, and therefore acts on $\Gr(k,n)$.  Let $c \in \symgroup$ be the long cycle, $c = (1\ 2\ 3\  \dots\ n)$, and let $w_0 \in \symgroup$ be the long element, $w_0(i) = n+1-i$. Acting by $w_0$ on a Schubert variety gives an opposite Schubert variety. Acting by $c^j$ gives a \emph{cyclically shifted Schubert variety}. Every positroid variety can be expressed as an intersection of cyclically shifted Schubert varieties (and/or opposite Schubert varieties)
\cite{KLS}.
In our case, let $\calI_t = \{\alpha \in \allsubsets \mid \alpha \cap 
[\beta(t+1), \gamma(t)] \neq \emptyset$\}.
Then $\pdivisor = \rich \cap \matroidvariety{\calI_t}$. Moreover, $\matroidvariety{\calI_t}$ is a cyclically shifted Schubert variety: if $\epsilon = \{1,2,3, \dots k-1, n-\gamma(t)+\beta(t+1)\}$, then
$\matroidvariety{\calI_t} = c^{\gamma(t)} X_\epsilon$.
Thus
\[
  \pdivisor = \lowerschubert \cap \upperschubert \cap c^{\gamma(t)} X_\epsilon
\,.
\]
We remark that $X_\epsilon$ is in fact a \emph{special} Schubert variety (the partition associated to $\epsilon$ is a single row); these have many nice properties and play a fundamental role in Schubert calculus (see e.g. \cite{Fulton}).

\section{Acknowledgements}

We wish to thank Allen Knutson, Cameron Marcott, David Speyer,
and Alex Woo, for helpful conversations and correspondence pertaining to
this paper.  Research of the second author was supported by 
NSERC Discovery Grant RGPIN-04741-2018.


\end{document}